\theoremstyle{plain}
\newtheorem{theorem}{Theorem}[section]
\newtheorem{prop}[theorem]{Proposition}
\newtheorem{lemma}[theorem]{Lemma}
\newtheorem{defin}[theorem]{Definition}
\newtheorem{oss}[theorem]{Remark}
\numberwithin{equation}{section}
\begin{document}
\title{Extending an example by Colding and Minicozzi}
\author{Lorenzo Ruffoni}
\address{Mathematics Department, Florida State University, Tallahassee FL 32306, USA}
\email{lruffoni@fsu.edu}

\author{Francesca Tripaldi}
\address{Department of Mathematics and Statistics, University of Jyv\"{a}skyl\"{a}, Jyv\"{a}skyl\"{a} FI-40014, Finland}
\email{francesca.f.tripaldi@jyu.fi}

\subjclass[2010]{53A10, 49Q05}

\begin{abstract}
Extending an example by Colding and Minicozzi \cite{cm28}, we construct a sequence of properly embedded minimal disks $\Sigma_i$ in an infinite Euclidean cylinder around the $x_3$-axis with curvature blow-up at a single point. The sequence converges to a non smooth and non proper minimal lamination in the cylinder. Moreover, we show that the disks $\Sigma_i$ are not  properly embedded in a sequence of open subsets of $\mathbb R^3$ that exhausts $\mathbb R^3$.
\end{abstract}

 \maketitle
 \tableofcontents

\section{Introduction}
In a series of influential papers \cite{cm21,cm22,cm23,cm24}, Colding and Minicozzi initiated the study of sequences of minimal disks in a 3-manifold. In general, if no restriction on the curvatures of the disks is required, it is known that some wild behaviour should be expected. For instance, in \cite{cm28} Colding and Minicozzi constructed an example of a sequence of minimal disks in a Euclidean ball for which curvatures blow up at the centre of the ball, and such that the limit lamination is neither smooth nor proper. This result is obtained by a careful analysis of the Weierstrass representation of the minimal disks.

Following this example, a number of similar results have been obtained via analogous methods, in which comparable wild limits are observed with the curvature blowing up at a finite set of points on a line \cite{Dean2006}, along a closed segment \cite{Khan}, or more generally any compact subset of a line \cite{Kleene}. Using variational methods, Hoffman and White \cite{HoffmanWhite1} provided examples of minimal disks in an infinite Euclidean cylinder with curvature blow-ups along any prescribed compact subset of the axis of the cylinder. 

In this paper, we follow the approach of \cite{cm28} and show that their example can be extended to create a sequence of properly embedded minimal disks $\Sigma_i$ in an infinite Euclidean cylinder, displaying the same pathologies.
By \cite{cm24}, it is known that these pathologies do not occur if the sequence $\Sigma_i$ is indeed a sequence of minimal disks properly embedded in a sequence of open sets which invade the whole $\mathbb R^3$.

Here we prove that the disks $\Sigma_i$, which we obtained by extending the ones constructed in \cite{cm28}, do not give rise to minimal disks properly embedded in a growing family of cylinders which exhaust $\mathbb R^3$.

Our main result is therefore analogous to Theorem 1 in \cite{cm28} and can be stated as follows; we refer to \cite{cm28} for the definition of multi-valued graph and pictures.
\begin{theorem}\label{main thm}
One can construct a sequence of properly embedded minimal disks $0\in\Sigma_i\subset\lbrace x_1^2+x_2^2\le 1, x_3\in\mathbb R\rbrace\subset\mathbb R^3$  containing the $x_3$-axis, $\lbrace(0,0,t)\mid t\in\mathbb R\rbrace\subset\Sigma_i$, and such that the following conditions are satisfied:
\begin{itemize}
    \item [(1)] $\lim_{i\to\infty}\vert A_{\Sigma_i}\vert^2(0)=\infty$;
    \item [(2)] $\sup_i\sup_{\Sigma_i\setminus B_\delta}\vert A_{\Sigma_i}\vert^2<\infty$ for all $\delta>0$;
    \item [(3)] $\Sigma_i\setminus\lbrace x_3\text{-axis}\rbrace=\Sigma_{1,i}\cup\Sigma_{2,i}$, for multi-valued graphs $\Sigma_{1,i}$ and $\Sigma_{2,i}$;
    \item [(4)] $\Sigma_{i}\setminus \lbrace x_3=0\rbrace$ converges to two embedded minimal disks $\Sigma^{\pm}\subset\lbrace\pm x_3>0\rbrace$ with $\overline{\Sigma^{\pm}}\setminus\Sigma^{\pm}=\lbrace x_1^2+x_2^2\le 1, x_3=0\rbrace$. Moreover, $\Sigma^{\pm}\setminus\lbrace x_3\text{-axis}\rbrace=\Sigma^{\pm}_1\cup\Sigma^{\pm}_2$ for multi-valued graphs $\Sigma^\pm_1$ and $\Sigma^\pm_2$ each of which spirals into $\lbrace x_3=0\rbrace$.
\end{itemize}

\end{theorem}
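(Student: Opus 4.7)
The strategy is to build $\Sigma_i$ by attaching helicoidal ``caps'' above and below the Colding--Minicozzi disk of \cite{cm28}, working at the level of the Weierstrass representation so that minimality is preserved automatically. Recall that a conformal minimal immersion is determined by a pair $(g,\phi)$ consisting of a meromorphic Gauss map and a holomorphic height differential on a simply connected Riemann surface, with the immersion obtained by real integration of the standard Weierstrass one-form. In \cite{cm28}, explicit data $(g_i,\phi_i)$ on a bounded disk $\Omega_i\subset\mathbb{C}$ produce a minimal disk in the unit ball exhibiting the desired curvature blow-up at the origin and the spiraling sheets. I would extend $(g_i,\phi_i)$ to a simply connected unbounded domain $\widetilde\Omega_i\supset\Omega_i$ in such a way that on each of the two added ends the new data agrees with that of a vertical helicoid of some pitch $p_i$. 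The matching is achieved by interpolating, in an annular collar, the logarithm of the Gauss map and the coefficient of the height differential against the corresponding quantities of the model helicoid, with the pitch $p_i$ chosen so that the Colding--Minicozzi data is already close to the helicoidal one on $\partial\Omega_i$.

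Once the extended data is fixed, I would verify the four conclusions. Property (1) and the half of (4) concerning the convergence to two limit disks $\Sigma^\pm$ with the prescribed spiraling behaviour follow directly from \cite{cm28}, because both phenomena are confined to a neighbourhood of the origin which the extension does not modify. For (2), I would combine the interior curvature bound from \cite{cm28} with a direct Weierstrass computation on the helicoidal ends: there the curvature is controlled explicitly in terms of $p_i$, and by choosing the sequence $(p_i)$ bounded one obtains a uniform estimate. Property (3) is preserved by construction, since each of the two sheets of the Colding--Minicozzi disk extends along a single sheet of the attached helicoidal cap; the $x_3$-axis is still contained in $\Sigma_i$ because the Weierstrass data is arranged to have the axis as the locus $\{g=0\}\cup\{g=\infty\}$ of vertical normals on a meridian of $\widetilde\Omega_i$.

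The main technical obstacle is ensuring that the extended immersion is a properly embedded disk in the cylinder $\{x_1^2+x_2^2\le 1\}$. Minimality is automatic from the Weierstrass representation, but embeddedness is not: the image could develop self-intersections in the interpolation collar or between different spiral generations. Embeddedness will be established by writing each end as a genuine multi-valued graph of a monotone height function over a punctured disk, matching continuously with the Colding--Minicozzi piece on its boundary circle, and checking separation of consecutive sheets via the explicit formulas. Properness in the cylinder follows from the fact that the parameter $\mathrm{Re}\int\phi$, which equals $x_3$, tends to $\pm\infty$ on the two ends of $\widetilde\Omega_i$; a control of the horizontal radius $\sqrt{x_1^2+x_2^2}$ in terms of this height parameter ensures that the image stays inside the cylinder and accumulates to the boundary only at infinity in $x_3$. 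The hardest step is this last global control of the embedding during the interpolation, since it is where the freedom in the choice of $(g_i,\phi_i)$ at the boundary of $\Omega_i$ has to be used simultaneously to match the helicoid, to remain in the cylinder, and to avoid collisions between sheets.
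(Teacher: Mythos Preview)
Your strategy is genuinely different from the paper's, and in fact more complicated than necessary. The paper does \emph{not} glue or interpolate anything: it keeps the \emph{same} Weierstrass data as in \cite{cm28}, namely $g_a=e^{ih_a}$ with $h_a(z)=\frac{1}{a}\arctan(z/a)$ and $\phi=dz$, and simply enlarges the parameter domain from the bounded region $\{|x|\le 1/2,\ |y|\le (x^2+a^2)^{3/4}/2\}$ to an unbounded simply connected region $\Omega_a$ that still avoids the poles $\pm ia$. Since $h_a$ is already holomorphic on all of $\mathbb C\setminus\{\pm ia\}$, no matching is needed and minimality is automatic. All the work goes into choosing the shape of $\Omega_a$ (essentially $|y|\lesssim (x^2+a^2)^{1/2}$ for $|x|$ large) so that the same slice-by-slice argument as in \cite{cm28} still shows each horizontal section is a graphical arc reaching a fixed distance $R_0$ from the axis; proper embeddedness in the cylinder follows. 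Items (1)--(4) then come almost for free, since near $x_3=0$ the surface literally coincides with the Colding--Minicozzi one.

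Your approach has a real gap at the interpolation step. Interpolating $\log g$ and the coefficient of $\phi$ across an annular collar, as you describe, will not generally produce holomorphic data, and hence will not produce a minimal immersion via the Weierstrass formula. Cutoff-type interpolations destroy holomorphicity; to make this work you would need either a holomorphic approximation argument (losing the exact equality with the \cite{cm28} data on the core, which you use for (1) and (4)) or a genuine gluing/desingularisation technique, and you have not indicated either. By contrast, the paper sidesteps this entirely because the single global formula already does the job.

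There is also a factual slip: on the $x_3$-axis the unit normal is \emph{horizontal}, i.e.\ $|g|=1$, not $g\in\{0,\infty\}$. In the data above this is the locus $y=0$, where $v_a=0$ and hence $|g_a|=|e^{iu_a}|=1$; the paper uses exactly this to get $F_a(x,0)=(0,0,x)$ and the multi-valued graph structure in (3).
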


From $(4)$ we get that $\Sigma_i\setminus\lbrace 0\rbrace$ converges to a minimal lamination of $\lbrace x_1^2+x_2^2\le 1,x_3\in\mathbb R\rbrace\setminus\lbrace 0\rbrace$ (with leaves $\Sigma^-$, $\Sigma^+$, and $\lbrace x_1^2+x_2^2\le 1, x_3=0\rbrace\setminus\lbrace 0\rbrace$) which does not extend to a lamination of $\lbrace x_1^2+x_2^2\le 1, x_3\in\mathbb R\rbrace$. In other words, $0$ is not a removable singularity.

The structure of the paper is as follows: in Section 2 we review the definitions and basic results on the Weierstrass representation of minimal surfaces in the Euclidean space $\mathbb R^3$, and set up some notations for the domains and functions that will be used. In Section 3, we show that the disks constructed with those data are properly embedded in a fixed infinite Euclidean cylinder around the $x_3$-axis, and in Section 4 we show that they are not a sequence of properly embedded disks in an exhausting family. Section 5 contains the conclusion of the proof of the main theorem.\par

\vspace{.5cm}
\textbf{Acknowledgements}: the authors wish to thank the Department of Mathematics of Bologna University for its kind hospitality during the early stage of this project. This work has been partially supported by the Academy of Finland (grant 288501 `\emph{Geometry of subRiemannian groups}'), by the European Research Council  (ERC Starting Grant 713998 GeoMeG `\emph{Geometry of Metric Groups}') and by the European Union's Horizon 2020 research and innovation programme under the Marie Sk\l{}odowska-Curie grant agreement No 777822 (`\emph{Geometric and Harmonic Analysis with Interdisciplinary Applications}').



\section{Preliminaries and set up}

Let us first fix some notation. Following \cite{cm28}, we will use $(x_1,x_2,x_3)$ as the coordinates in $\mathbb R^3$ and $z=x+iy$ in $\mathbb C$. Given $f:\mathbb C\to\mathbb C^n$, $\partial_xf$ and $\partial_yf$ will denote $\frac{\partial f}{\partial x}$ and $\frac{\partial f}{\partial y}$ respectively, and likewise we will have $\partial_zf=\frac{1}{2}(\partial_xf-i\partial_yf)$. Given a point $p=(p_1,p_2,p_3)\in\mathbb R^3$ and a value $r>0$, we will denote by $D_{r}(p)$ the Euclidean disk of radius $r$, contained in the horizontal plane $\lbrace x_3=p_3\rbrace$, and centred at the point $p=(p_1,p_2,p_3)$.

When $\Sigma$ is a smooth surface embedded in $\mathbb R^3$, we denote by $\mathbf{K}_\Sigma$ its sectional curvature, and by $A_\Sigma$ its second fundamental form; so when $\Sigma$ is minimal we will have $\vert A_\Sigma\vert^2=-2 \mathbf{K}_\Sigma$. Moreover, when $\Sigma$ is oriented, $\mathbf{n}_\Sigma$ will denote its unit normal.

Let us recall the classical definition of a Weierstrass representation and its role in constructing minimal surfaces (see \cite{osserman2002survey}). Let $\Omega\subset\mathbb C$ be a domain in the complex plane, $g$ a meromorphic function on $\Omega$ and $\phi$ a holomorphic 1-form on $\Omega$; then we refer to the couple $(g,\phi)$ as a Weierstrass representation.

Given a a Weierstrass representation $(g,\phi)$, one can associate to it a conformal minimal immersion $F:\Omega\to\mathbb R^3$
\begin{align}\label{ImmersionF}
  F(z)=Re\int_{\zeta\in\gamma_{z_0,z}}\bigg(\frac{1}{2}\big(g^{-1}(\zeta)-g(\zeta)\big), \frac{i}{2}\big(g^{-1}(\zeta)+g(\zeta)\big),1\bigg)\phi(\zeta)\,.
\end{align}

Here, $z_0\in\Omega$ denotes a fixed base point, and the integral is taken along a path $\gamma_{z_0,z}$ that joins $z_0$ to $z$ in $\Omega$. As soon as  $g$ has no zeros or poles and $\Omega$ is simply connected, $F$ will not depend on the choice of the integration path (see \cite{cm28}). On the other hand the choice of the base point $z_0$ will change the value of $F$ by an adding constant.

Using the Weierstrass data, one can find explicit formulae for the unit normal $\mathbf{n} $ and the Gauss curvature $\mathbf{K}$ (see Sections 8 and 9 in ~\cite{osserman2002survey}):
\begin{align}
    \label{unit normal 1} \mathbf{n}&=\frac{\big(2 Re g,2 Im g, \vert g\vert^2-1\big)}{\vert g\vert^2+1}\,,\\
    \mathbf{K}&=-\bigg[\frac{4\vert\partial_zg\vert\,\vert g\vert}{\vert\phi\vert\,(1+\vert g\vert^2)^2}\bigg]^2
\end{align}

As previously pointed out, the main goal of this paper is to provide an unbounded extension of the domain, on which to keep the same choice of the Weierstrass data as in \cite{cm28}. The map $F$ is guaranteed to be an immersion (that is, $dF\neq 0$) as soon as $\phi$ does not vanish and $g$ has no zeros or poles.

We include the following lemma (see Lemma 1 in \cite{cm28}), since its results will be useful for some further computations later on.

\begin{lemma}\label{lemma_derF}
Let $F$ be as in equation (\ref{ImmersionF}) for the Weierstrass data $(g,\phi)$ given by  $g(z)=e^{i(u(z)+iv(z)}$ and $\phi=dz$, then
\begin{align}
   \label{dxF} \partial_xF&=(\sinh v\cdot\cos u, \sinh v\cdot\sin u, 1)\,,\\
  \label{dyF} \partial_yF&=(\cosh v \cdot\sin u, -\cosh v\cdot\cos u,0)\,.
\end{align}
\end{lemma}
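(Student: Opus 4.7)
The plan is to reduce the statement to a direct calculation, by first understanding how the real-part-of-a-line-integral form of $F$ behaves under differentiation, and then substituting the explicit expression for $g$.

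First I would observe that, since $\phi=dz$ and the integrand in \eqref{ImmersionF} is a holomorphic $\mathbb{C}^3$-valued function of $z$, we can write $F=\mathrm{Re}\,G$ where $G:\Omega\to\mathbb{C}^3$ is the holomorphic primitive with $G'(z)=\Phi(z):=\bigl(\tfrac12(g^{-1}-g),\tfrac{i}{2}(g^{-1}+g),1\bigr)$. For any holomorphic $G$ the Cauchy--Riemann equations give $\partial_x G=G'$ and $\partial_y G=iG'$, hence
\begin{align*}
\partial_x F &= \mathrm{Re}\,\Phi(z), & \partial_y F &= -\mathrm{Im}\,\Phi(z).
\end{align*}
This reduces the lemma to reading off the real and imaginary parts of $\Phi$ in terms of $u,v$.

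Next I would substitute $g(z)=e^{i(u+iv)}=e^{-v}(\cos u+i\sin u)$, so that $g^{-1}=e^{v}(\cos u-i\sin u)$. A two-line calculation gives
\begin{align*}
\tfrac12(g^{-1}-g) &= \sinh v\cos u - i\cosh v\sin u,\\
\tfrac{i}{2}(g^{-1}+g) &= \sinh v\sin u + i\cosh v\cos u,
\end{align*}
using $e^v\pm e^{-v}=2\cosh v$ or $2\sinh v$. The third component is just $1$, with real part $1$ and imaginary part $0$.

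Taking real parts of the three components yields \eqref{dxF}, and the negatives of the imaginary parts yield \eqref{dyF}. There is no real obstacle here; the only thing to be careful about is the sign convention $\partial_y F=-\mathrm{Im}\,\Phi$ (as opposed to $+\mathrm{Im}$), which comes from $\partial_y(\mathrm{Re}\,G)=\mathrm{Re}(iG')=-\mathrm{Im}(G')$. Everything else is a bookkeeping exercise in the identities $g=e^{-v+iu}$ and $g^{-1}=e^{v-iu}$.
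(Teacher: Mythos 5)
Your proposal is correct: the paper gives no proof of this lemma, simply quoting Lemma 1 of \cite{cm28}, and your argument (writing $F=\mathrm{Re}\,G$ with $G$ holomorphic, using $\partial_xG=G'$, $\partial_yG=iG'$, and substituting $g=e^{-v+iu}$, $g^{-1}=e^{v-iu}$) is exactly the standard computation behind that cited result, with the sign $\partial_yF=-\mathrm{Im}\,\Phi$ handled correctly.
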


Let us construct the one-parameter family (with parameter $a\in (0,1/2]$) of minimal immersions $F_a$ that we will be using to construct our family of embedded minimal disks $\Sigma_a$.

\begin{defin} \label{ChoiceWeierstrassData} We choose the same Weierstrass data $(g_a,\phi)$ as in \cite{cm28}, that is:
\begin{align}
    g_a=e^{ih_a},\text{ where } h_a(z)=\frac{1}{a}\arctan \bigg(\frac{z}{a}\bigg)\text{ and }
    \phi=dz\,.
\end{align}

In this case, the two real functions $u$ and $v$ in Lemma \ref{lemma_derF} will then depend on the parameter $a$: $h_a=u_a+iv_a$ (see Remark \ref{Explicit formulation for u and v} for an explicit formulation of both $u_a$ and $v_a$).
\end{defin}

In this paper, we are extending the original bounded domain $\Omega_a$ defined in \cite{cm28} to an unbounded one, which will still be denoted by $\Omega_a$. To do so, we will first introduce the two families of domains $\mathcal{D}^1_a$ and $\mathcal{D}^2_a$, as follows (see Figure \ref{fig:domains}):
\begin{align}
    \mathcal{D}^1_a=&\left \lbrace (x,y)\left\vert \vert y \vert\le \frac{(x^2+a^2)^{3/4}}{2}\right.\right \rbrace\,,\\
    \mathcal{D}^2_a=&\left\lbrace (x,y)\left\vert \vert y\vert\le \frac{(x^2+a^2)^{1/2}}{2}\right.\right\rbrace\,.
\end{align}
\begin{center}
\begin{figure}
    \centering
    \includegraphics[width=13.2cm]{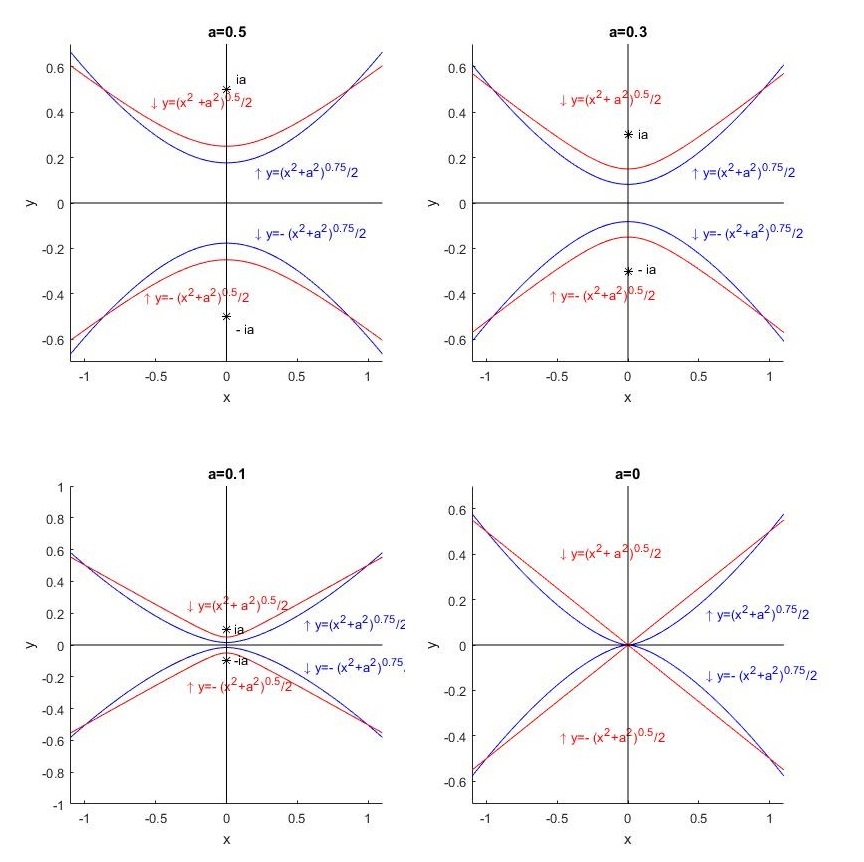}
    \caption{Graphical representations of both domains $\mathcal{D}^1_a$ (in blue) and $\mathcal{D}^2_a$ (in red) for different values of the parameter $a$.}
    \label{fig:domains}
\end{figure}
\end{center}

One should notice that the two domains $\mathcal{D}^1_a$ and $\mathcal{D}^2_a$ cross at the points where 
\begin{align}
    x=\pm\sqrt{1-a^2}\,,
\end{align}
hence somewhere in the intervals $[\sqrt{3}/2, 1]$ and $[-1,-\sqrt{3}/2]$, depending on the parameter $a\in (0,1/2]$. One should also notice that for $x^2>1-a^2$, the second domain $\mathcal{D}^2_a$ (in red) is strictly contained inside the first domain $\mathcal{D}^1_a$ (in blue), that is
\begin{align}
  \mathcal{D}^2_a\cap\lbrace (x,y)\mid x^2>1-a^2\rbrace\subset\mathcal{D}^1_a\cap  \lbrace (x,y)\mid x^2>1-a^2\rbrace\,.
\end{align}

Viceversa, for $x^2<1-a^2$, we have the opposite inclusion:
\begin{align}\label{dentro}
    \mathcal{D}^1_a\cap\lbrace (x,y)\mid x^2<1-a^2\rbrace\subset\mathcal{D}^2_a\cap  \lbrace (x,y)\mid x^2<1-a^2\rbrace\,.
\end{align}

In Colding and Minicozzi's paper, the domains of definition $\Omega_a$ are exactly taken as follows (see equation (2.1) in \cite{cm28}):
\begin{align}
    \left\lbrace (x,y)\left\vert\vert x\vert\le \frac{1}{2},\,\vert y\vert\le\frac{(x^2+a^2)^{3/4}}{2}\right.\right\rbrace=\mathcal{D}^1_a\cap\left\lbrace (x,y)\left\vert\vert x\vert\le\frac{1}{2}\right.\right\rbrace\,,
\end{align}
which always falls into case (\ref{dentro}), since $\frac{1}{4}<1-a^2$ for any $a\in (0,1/2]$.

Let us take into consideration the value $\frac{1}{\pi}$ (please refer to Remark \ref{reason for xA} for the reasoning behind the choice of this value). If we divide the interval $[\frac{1}{\pi},\frac{1}{2}]$ into thirds, we get the following two values:
\begin{align}\label{def xA}
    x_A&=\frac{1}{\pi}+\bigg(\frac{1}{2}-\frac{1}{\pi}\bigg)\cdot\frac{1}{3}=\frac{1}{\pi}+\frac{\pi-2}{6\pi}=\frac{6+\pi-2}{6\pi}=\frac{4+\pi}{6\pi}\,,\text{ and }\\
    x_B&=\frac{1}{\pi}+\bigg(\frac{1}{2}-\frac{1}{\pi}\bigg)\cdot\frac{2}{3}=\frac{1}{\pi}+\frac{\pi-2}{3\pi}=\frac{3+\pi-2}{3\pi}=\frac{1+\pi}{3\pi}\,.
\end{align}

By symmetry, the points $-x_B$ and $-x_A$ will divide into thirds the interval $[-\frac{1}{2},-\frac{1}{\pi}]$.

As already pointed out, $\mathcal{D}^1_a\cap\lbrace(x,y)\mid\vert x \vert\le \frac{1}{2}\rbrace\subset\mathcal{D}^2_a\cap\lbrace(x,y)\mid\vert x \vert\le \frac{1}{2}\rbrace$, which means that if we consider the two boundaries as follows
\begin{align}
    \vert y\vert=\frac{(x^2+a^2)^{3/4}}{2}\,,\text{ for }\vert x\vert\le x_A\,,\text{ and }
    \vert y\vert=\frac{(x^2+a^2)^{1/2}}{2}\,,\text{ for } \vert x\vert\ge x_B
\end{align}
we will see jumps (see Figure \ref{fig:jumps} for a graphical representation of these jumps) of magnitude equal to $y_B-y_A$, where
\begin{align}
    y_A =\frac{(x_A^2+a^2)^{3/4}}{2}\,,\text{ and }y_B=\frac{(x_B^2+a^2)^{1/2}}{2}\,.
\end{align}
In order to join these two domains 
\begin{align}
    \mathcal{D}^1_a\cap\lbrace (x,y)\mid\vert x\vert\le x_A\rbrace\text{ and }\mathcal{D}^2_a\cap\lbrace (x,y)\mid\vert x\vert\ge x_B\rbrace
\end{align}
we can simply take the following line segments

 \[
    \left\{\begin{array}{lr}
           \xi_1(x)=y_A+(y_B-y_A)\frac{x-x_A}{x_B-x_A} \text{ from } (x_A,y_A) \text{ to }(x_B,y_B)\\
         \xi_2(x)=y_A+(y_B-y_A)\frac{x+x_A}{x_A-x_B}\text{ from } (-x_A,y_A) \text{ to }(-x_B,y_B)\\
         \xi_3(x)=-\xi_1(x)=-y_A+(y_A-y_B)\frac{x-x_A}{x_B-x_A}\text{ from } (x_A,-y_A) \text{ to }(x_B,-y_B)\\
         \xi_4(x)=-\xi_2(x)=-y_A+(y_A-y_B)\frac{x+x_A}{x_A-x_B}\text{ from } (-x_A,-y_A) \text{ to }(-x_B,-y_B)
        \end{array}\right.
  \]

Using these segments, we are then able to define our domains of definition $\Omega_a$.

\begin{center}
\begin{figure}
    
\includegraphics[width=14cm]{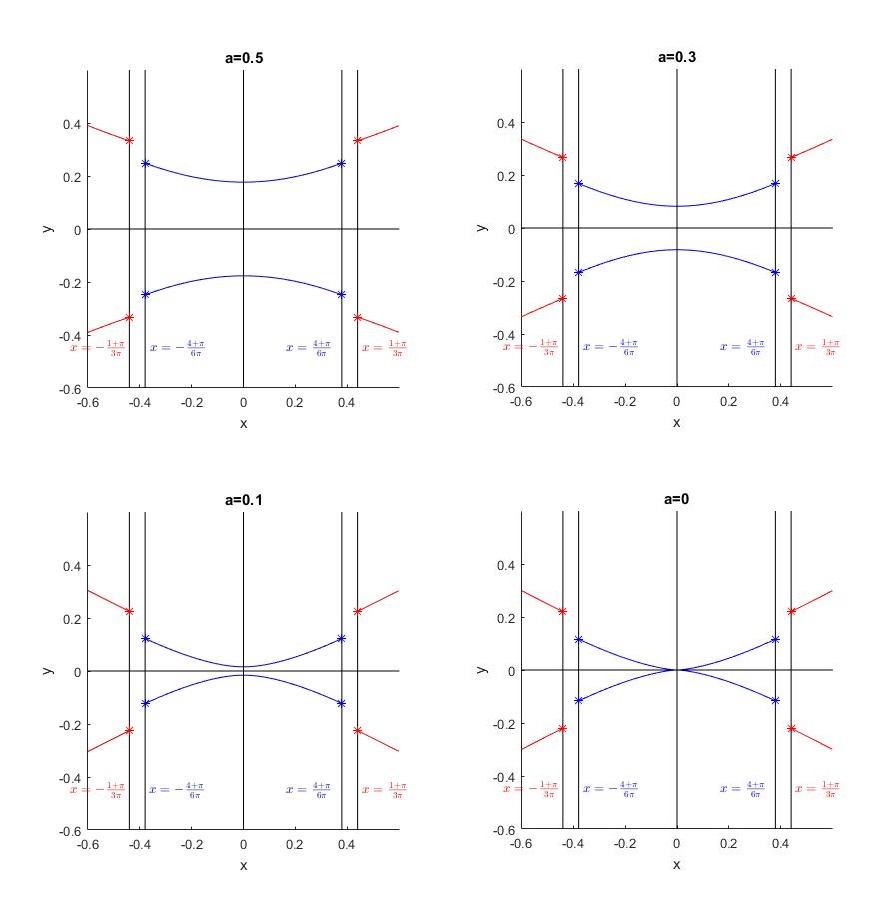}
\caption{Graphical representations of the domains $\mathcal{D}^1_a\cap\lbrace (x,y)\mid \vert x\vert\le x_A\rbrace$ (in blue) and $\mathcal{D}^2_a\cap\lbrace (x,y)\mid\vert x\vert\ge x_B\rbrace$ (in red) for different values of the parameter $a$.}
    \label{fig:jumps}
    \end{figure}
\end{center}

\begin{defin}\label{domains}

Our unbounded domains of definition $\Omega_a$ for $a\in (0,1/2]$ are given by
\[
    \Omega_a=\left\lbrace (x,y) \text{ such that }\left\{\begin{array}{lr}
        \vert y\vert\le \frac{(x^2+a^2)^{3/4}}{2}\text{ for }\vert x\vert \le x_A\\
         \vert y\vert\le \frac{(x^2+a^2)^{1/2}}{2}\text{ for }\vert x\vert \ge x_B\\
        \vert y\vert\le\xi_1(x)\text{ for } x_A\le x\le x_B \\
        \vert y\vert\le\xi_2(x)\text{ for } -x_B\le x\le -x_A
        \end{array}\right.\,\right\rbrace\,,
  \]
 
 and 
 \begin{align}
     \Omega_0=\bigcap_{a>0} \Omega_a\setminus\lbrace 0\rbrace=\Omega_0^{+}\cup\Omega_0^{-}\,,
 \end{align}
 where $\Omega_0^{+}$and $\Omega_0^{-}$ are respectively the right and left components of $\Omega_0$.
 
\end{defin}
\begin{oss}
One should notice that Definition \ref{domains} gives rise to domains $\Omega_a$ whose boundaries are piecewise smooth. In order to obtain smooth domains, it is sufficient to take the convolution of the boundary with a Friedrich mollifier.
\end{oss}
\begin{oss}\label{stima su tutto il dominio}
Let us notice that, by construction, $\Omega_a\subset\mathcal{D}_a^2$ for any $a\in[0,1/2]$.
\end{oss}

\begin{oss}\label{Explicit formulation for u and v}

Let us study the explicit formulation of $h_a=u_a+iv_a$. On a suitable domain of definition we have, using the standard notation $z=x+iy$, that
\begin{align*}
    h_a(z)&=\frac{1}{a}\arctan\bigg(\frac{z}{a}\bigg)=\frac{i}{2a}\bigg[Log\bigg(1-\frac{iz}{a}\bigg)-Log\bigg(1+\frac{iz}{a}\bigg)\bigg]\\=&\frac{i}{2a}\bigg[Log\bigg(\frac{a-ix+y}{a}\bigg)-Log\bigg(\frac{a+ix-y}{a}\bigg)\bigg]\\=&\frac{i}{2a}\bigg[\frac{1}{2}\log\bigg(\frac{(a+y)^2+x^2}{a^2}\bigg)+i Arg\bigg(\frac{a+y-ix}{a}\bigg)+\\&-\frac{1}{2}\log\bigg(\frac{(a-y)^2+x^2}{a^2}\bigg)-i Arg\bigg(\frac{a-y+ix}{a}\bigg)\bigg]\\=&\frac{i}{2a}\bigg[\frac{1}{2}\log\bigg(\frac{(a+y)^2+x^2}{(a-y)^2+x^2}\bigg)+iArg\bigg(\frac{a+y-ix}{a}\bigg)-iArg\bigg(\frac{a-y+ix}{a}\bigg)\bigg]\\=& \frac{1}{2a}Arg\bigg(\frac{a-y+ix}{a}\bigg)-\frac{1}{2a}Arg\bigg(\frac{a+y-ix}{a}\bigg)+\frac{i}{4a}\log\bigg(\frac{(a+y)^2+x^2}{(a-y)^2+x^2}\bigg)\\=&u_a(x,y)+iv_a(x,y)\,. 
\end{align*}

\end{oss}
\begin{oss}
Let us further notice that the functions $h_a$ are indeed well-defined, since the domains $\Omega_a$ are simply connected and the points $\pm ia\notin\Omega_a$. In fact, when $x=0$, we have $\vert y\vert\le a^{3/2}/2$, and we have that $a^{3/2}/2<a$ if and only if $a<4$ (and by construction we are assuming $a\le 1/2$).

\end{oss}

\begin{defin} In order to fix the notation, we will be denoting by $F_a$ the conformal minimal immersion $F_a:\Omega_a\to\mathbb R^3$ associated to the Weierstrass representation $(g_a,\phi)$ for any $a\in (0,1/2]$.
\end{defin}

In the following lemma, we will be covering some explicit expressions and bounds that will be useful in the next sections. Unless otherwise stated, from now on we will be working within our domains of definition, that is $z=x+iy\in\Omega_a$.

\begin{lemma}\label{easybounds}
Let $h_a(z)=u_a(z)+iv_a(z)=\frac{1}{a}\arctan\left(\frac{z}{a}\right)$ be as above; then the following holds:
\begin{enumerate}
    \item $\partial_z h_a(z)=\dfrac{1}{z^2+a^2}=\dfrac{x^2+a^2-y^2-2ixy}{(x^2+a^2-y^2)^2+4x^2y^2}\,,$
    \item $\mathbf{K}_a(z)=-\dfrac{\vert\partial_z h_a\vert^2}{\cosh^4(v_a)}=-\dfrac{\vert z^2+a^2\vert^{-2}}{\cosh^4(Im(\arctan(z/a))/a)}\,,$
    \item $\partial_y u_a(z)=\dfrac{2xy}{(x^2+a^2-y^2)^2+4x^2y^2}\,,$ 
    \item $\partial_y v_a(z)=\dfrac{x^2+a^2-y^2}{(x^2+a^2-y^2)^2+4x^2y^2}\,,$

\item $|\partial_y u_a(z)| \leq \dfrac{4|xy|}{(x^2+a^2)^2}\,, $
\item $\partial_y v_a(z) > \dfrac{3}{8(x^2+a^2)}\,,$
\item $v_a(x,y)\geq 0\text{ for }y\ge 0\text{ and }v_a(x,y)<0\text{ for }y<0\,.$
\end{enumerate}
\end{lemma}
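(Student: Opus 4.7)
The plan is to establish the seven items roughly in order, grouping them into four blocks of similar character.

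First, item (1) is a direct computation: $\frac{d}{dz}\arctan(z/a)=\frac{1/a}{1+z^2/a^2}=\frac{a}{z^2+a^2}$, so $\partial_z h_a=\frac{1}{z^2+a^2}$. Multiplying numerator and denominator by $\overline{z^2+a^2}=x^2+a^2-y^2-2ixy$ gives the stated real-imaginary split, and in particular identifies $|z^2+a^2|^2=(x^2+a^2-y^2)^2+4x^2y^2$.

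For item (2), I would use the Weierstrass curvature formula with $g_a=e^{ih_a}$, $\phi=dz$. Since $\partial_z g_a=ig_a\,\partial_z h_a$ we have $|\partial_z g_a|=|g_a|\,|\partial_z h_a|$; moreover $|g_a|=e^{-v_a}$ (from $g_a=e^{i(u_a+iv_a)}$), so $1+|g_a|^2=e^{-v_a}(e^{v_a}+e^{-v_a})=2e^{-v_a}\cosh v_a$. Substituting into the curvature expression,
\begin{align*}
\frac{4|\partial_z g_a|\,|g_a|}{|\phi|(1+|g_a|^2)^2}
=\frac{4e^{-v_a}|\partial_z h_a|\,e^{-v_a}}{4e^{-2v_a}\cosh^2 v_a}
=\frac{|\partial_z h_a|}{\cosh^2 v_a},
\end{align*}
and squaring and adding the sign gives (2).

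For items (3) and (4), rather than differentiate the explicit expressions in Remark \ref{Explicit formulation for u and v}, I would invoke the Cauchy--Riemann relations: since $h_a$ is holomorphic, $\partial_z h_a=\partial_y v_a-i\,\partial_y u_a$, so taking the real and imaginary parts of the expression in (1) yields (3) and (4) immediately.

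For the bounds (5) and (6) I would use the inclusion $\Omega_a\subset\mathcal{D}_a^2$ (Remark \ref{stima su tutto il dominio}), which reads $y^2\le (x^2+a^2)/4$. From this, $x^2+a^2-y^2\ge\tfrac{3}{4}(x^2+a^2)$, hence $(x^2+a^2-y^2)^2\ge\tfrac{9}{16}(x^2+a^2)^2$; this already gives $|\partial_y u_a|\le\tfrac{2|xy|}{9(x^2+a^2)^2/16}=\tfrac{32|xy|}{9(x^2+a^2)^2}\le\tfrac{4|xy|}{(x^2+a^2)^2}$, which is (5). For (6), combining the same lower bound on the numerator with an upper bound on the denominator---via $|z^2+a^2|^2\le(|z|^2+a^2)^2=(x^2+y^2+a^2)^2\le\bigl(\tfrac{5}{4}(x^2+a^2)\bigr)^2$---yields $\partial_y v_a\ge\tfrac{3(x^2+a^2)/4}{25(x^2+a^2)^2/16}=\tfrac{12}{25(x^2+a^2)}>\tfrac{3}{8(x^2+a^2)}$.

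Finally, (7) is read off directly from the explicit formula in Remark \ref{Explicit formulation for u and v}, namely $v_a(x,y)=\tfrac{1}{4a}\log\bigl(\tfrac{(a+y)^2+x^2}{(a-y)^2+x^2}\bigr)$: the argument of the logarithm is $\ge 1$ iff $(a+y)^2\ge(a-y)^2$ iff $4ay\ge 0$, and strictly $>1$ iff $y>0$, so the sign of $v_a$ agrees with the sign of $y$ (and $v_a(x,0)=0$). None of these steps is a serious obstacle; the only point requiring care is that items (5) and (6) rely essentially on the domain restriction $y^2\le(x^2+a^2)/4$, and I would make sure to cite Remark \ref{stima su tutto il dominio} explicitly there.
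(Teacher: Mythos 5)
Your proposal is correct and follows essentially the same route as the paper: direct computation for (1)--(2), Cauchy--Riemann for (3)--(4), and the domain inclusion $\Omega_a\subset\mathcal{D}^2_a$ (i.e.\ $y^2\le\frac{x^2+a^2}{4}$) for (5)--(6), with all the constants checking out (your $\tfrac{32}{9}\le 4$ and $\tfrac{12}{25}>\tfrac38$ are fine). The only cosmetic difference is item (7), which the paper gets by combining the monotonicity in (6) with $v_a(x,0)=0$, whereas you read the sign directly off the explicit logarithmic formula for $v_a$; both arguments are valid.
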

\begin{proof}
(1) and (2) are just computations, and (3) and (4) follow directly from (1) by applying the Cauchy-Riemann equations to $h_a$ (see equations (2.2), (2.3) and (2.4) in \cite{cm28}).

Both inequalities (5-6) are obtained using the estimate $y^2\leq \frac{x^2+a^2}{4}$, which holds for every point $(x,y) \in \Omega_a$ (see Remark \ref{stima su tutto il dominio}). Finally, we obtain (7) by combining (6) with the fact that $v_a(x,0)=0$.
\end{proof}

\begin{oss}\label{vertical segment}
Using the notation in Definition \ref{ImmersionF}, we have that for any $x\in\mathbb R$ $F_a(x,0)=(0,0,x)$. This can be obtain by directly integrating equation (\ref{dxF}), knowing that $v_a(x,0)=0$.

\end{oss}

Let us further fix the following notation for points on the boundary of $\Omega_a$.
\begin{defin}
Given fixed values for $x_0\in\mathbb R$ and $a\in (0,1/2]$, we will denote by $y_{x_0,a}$ the $y$ value of the point of intersection between the line $x=x_0$ and the boundary of $\Omega_a$ in the upper-half plane. Likewise, we will denote by $-y_{x_0,a}$ the $y$ value of the point of intersection between $x=x_0$ and the boundary in the lower-half plane.

In other words:
\[
    y_{x_0,a}=\left\{\begin{array}{lr}
         \frac{(x_0^2+a^2)^{3/4}}{2}\text{ if }\vert x_0\vert \le x_A\,,\\
         \frac{(x_0^2+a^2)^{1/2}}{2}\text{ if }\vert x_0\vert \ge x_B\,,\\
        \xi_1(x_0)\text{ if } x_A\le x_0\le x_B\,, \\
        \xi_2(x_0)\text{ if } -x_B\le x_0\le -x_A
        \end{array}\right.\,,
  \]
and
\[
    -y_{x_0,a}=\left\{\begin{array}{lr}
         -\frac{(x_0^2+a^2)^{3/4}}{2}\text{ if }\vert x_0\vert \le x_A\,,\\
         -\frac{(x_0^2+a^2)^{1/2}}{2}\text{ if }\vert x_0\vert \ge x_B\,,\\
        -\xi_1(x_0)\text{ if } x_A\le x_0\le x_B\,, \\
        -\xi_2(x_0)\text{ if } -x_B\le x_0\le -x_A
        \end{array}\right.\,.
  \]
\end{defin}
\begin{oss}\label{bound su yxa}
As already observed in Remark \ref{stima su tutto il dominio}, we have $\Omega_a\subset \mathcal D^2_a$. This implies in particular that for all $x\in \mathbb R$ and $a\in (0,1/2]$ we have $|y_{x,a}|\le \frac{(x^2+a^2)^{1/2}}{2}$.
\end{oss}

\section{Lower bound: properly embedded in an infinite cylinder}

In this section we want to show that all the minimal disks $\Sigma_a=F_a(\Omega_a)$ are properly embedded in a fixed infinite cylinder around the $x_3$-axis in $\mathbb R^3$. To do this, following \cite{cm28}, we analyse the intersection of $\Sigma_a$ with the horizontal planes $\lbrace x_3=x\rbrace$ for varying $x\in \mathbb R$. We will see that  there exists some $R_0>0$ such that for all $a\in (0,\frac{1}{2}]$ and for all $x \in \mathbb R$, the intersection $\Sigma_a \cap \{x_3=x\}$ is a smooth curve in the plane $\{x_3=x\}$ going through the base point $F_a(x,0)=(0,0,x)$ (see Remark \ref{vertical segment}), whose restriction to the disk $D_{R_0}((0,0,x))$ is properly embedded in $D_{R_0}((0,0,x))$.\par

The following lemmas are the extension of Lemma 3 in \cite{cm28} to our unbounded domains. Let us fix some notations. We will consider the curves
\begin{align}\label{definition delle curve gamma}
    \gamma_{x,a}:[-y_{x,a},y_{x,a}]\to \{x_3=x\}, \gamma_{x,a}(y):=F_a(x,y)
\end{align}

and use the notation $\gamma'_{x,a}(y)=\frac{\partial F_a}{\partial y}(x,y)$. Using equation (\ref{dyF}) on the choice of Weierstrass data made in Definition \ref{ChoiceWeierstrassData}, this vector can easily be computed to be 
$$\gamma'_{x,a}(y)=(\cosh(v_a(x,y)\sin(u_a(x,y),-\cosh(v_a(x,y)\cos(u_a(x,y),0) $$
so in particular we have that
$$\left|\left| \gamma'_{x,a}(y) \right|\right|=\cosh(v_a(x,y)) \quad \textrm{and} \quad \left|\left| \gamma'_{x,a}(0) \right|\right|=1\,.$$

\begin{lemma}\label{facts about the curves}
Using the notations above, the following holds:
\begin{enumerate}
\item $x_3(F_a(x,y))=x$, that is $\gamma_{x,a}=\Sigma_a \cap \{x_3=x\}$,
\item $\cos(\gamma'_{x,a}(y),\gamma'_{x,a}(0))=\cos(u_a(x,y)-u_a(x,0))$,
\item the curve $\gamma_{x,a}$ is a graph over the line with direction $\gamma'_{x,a}(0)$, contained in a sector of angle $\theta_0\leq \frac{1}{x_A}=\frac{6\pi}{4+\pi}$ (see equation (\ref{def xA}) for the definition of $x_A$).
\end{enumerate}
\end{lemma}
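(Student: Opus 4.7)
The plan is to handle the three items in order, since the later ones build on the earlier two, with the bulk of the work concentrated in item (3). For (1), I would use that the third component of $\partial_y F_a$ vanishes by equation (\ref{dyF}), so $x_3\circ F_a$ is constant in $y$; combined with $F_a(x,0)=(0,0,x)$ from Remark \ref{vertical segment}, this gives $x_3(F_a(x,y))=x$ for every $(x,y)\in\Omega_a$, and $\gamma_{x,a}=\Sigma_a\cap\{x_3=x\}$ follows because the only points of $\Sigma_a$ with third coordinate $x$ are those parametrized by the vertical slice $\{x\}\times[-y_{x,a},y_{x,a}]$.

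For (2), I would plug the explicit expression from (\ref{dyF}) into the formula for $\gamma'_{x,a}(y)$, and use $v_a(x,0)=0$ to simplify $\gamma'_{x,a}(0)$. A direct computation of $\gamma'_{x,a}(y)\cdot\gamma'_{x,a}(0)$, together with the identity $\sin\alpha\sin\beta+\cos\alpha\cos\beta=\cos(\alpha-\beta)$, yields $\cosh(v_a(x,y))\cos(u_a(x,y)-u_a(x,0))$; dividing by the product of norms $\cosh(v_a(x,y))\cdot 1$ then recovers the desired formula.

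For (3), the strategy is to reduce both assertions to a single uniform pointwise bound
\[
|u_a(x,y)-u_a(x,0)|\le \frac{1}{2x_A}\qquad\text{for all }(x,y)\in\Omega_a.
\]
Once this is in hand, item (2) yields $\cos(\gamma'_{x,a}(y),\gamma'_{x,a}(0))>0$ (since $\frac{1}{2x_A}<\pi/2$), so $\gamma_{x,a}$ is a graph over the prescribed line. In addition, each tangent $\gamma'_{x,a}(s)$ points within angle $\frac{1}{2x_A}$ of $\gamma'_{x,a}(0)$, and the planar sector of full opening $\frac{1}{x_A}<\pi$ about this direction is convex; hence the displacement $\gamma_{x,a}(y)-(0,0,x)=\int_0^y\gamma'_{x,a}(s)\,ds$ also lies in that sector, giving $\theta_0\le\frac{1}{x_A}$.

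The main obstacle is establishing the uniform estimate, because $\Omega_a$ has different shapes on the ranges of $|x|$ appearing in Definition \ref{domains}. My plan is to integrate inequality (5) of Lemma \ref{easybounds} to obtain
\[
|u_a(x,y)-u_a(x,0)|\le \frac{2|x|y^2}{(x^2+a^2)^2},
\]
and then estimate $y^2$ case by case. For $|x|\le x_A$, the tighter shape bound $|y|\le (x^2+a^2)^{3/4}/2$ available in this region gives $\frac{|x|}{2(x^2+a^2)^{1/2}}\le\frac{1}{2}$; for $|x|\ge x_A$, Remark \ref{bound su yxa} yields $y^2\le(x^2+a^2)/4$, after which $\frac{|x|}{2(x^2+a^2)}\le\frac{1}{2|x|}\le\frac{1}{2x_A}$. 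Since $x_A<1$, both bounds fit inside $\frac{1}{2x_A}$, closing the argument.
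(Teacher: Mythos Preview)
Your proposal is correct and follows essentially the same approach as the paper: compute $\gamma'_{x,a}$ from \eqref{dyF}, identify the angle with $u_a(x,y)-u_a(x,0)$, and then bound this difference by integrating estimate (5) of Lemma~\ref{easybounds} together with the domain bound on $y^2$. The only minor difference is that the paper defers the case $|x|\le x_A$ to \cite{cm28} while you carry it out explicitly using the sharper bound $|y|\le (x^2+a^2)^{3/4}/2$; this makes your argument self-contained but does not change the method.
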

\begin{proof}
Statement (1) follows directly from  equation (\ref{ImmersionF}), as $\phi=dz$. Let us prove the other points. Notice that when $|x|\leq x_A$ they are proved in Lemma 3 in \cite{cm28}, so let us focus on the case  $|x|> x_A$.
To prove (2), following \cite{cm28}, we consider
\begin{align*}
\langle \gamma'_{x,a}(y),\gamma'_{x,a}(0)\rangle = \left|\left| \gamma'_{x,a}(y) \right|\right|\left|\left| \gamma'_{x,a}(0) \right|\right|\cos(\gamma'_{x,a}(y),\gamma'_{x,a}(0))
\end{align*}
so that by the above remarks we get 
\begin{align*}
\langle \gamma'_{x,a}(y),\gamma'_{x,a}(0)\rangle =\cosh(v_a(x,y))\cos(\gamma'_{x,a}(y),\gamma'_{x,a}(0))
\end{align*}
On the other hand, equation (\ref{dyF}) can also be used to obtain directly that
    \begin{align*}
                 \langle \gamma'_{x,a}(y),\gamma'_{x,a}(0)\rangle = \cosh(v_a(x,y))\cos(u_a(x,y)-u_a(x,0))
            \end{align*}
which proves (2).\par

To obtain (3) we are going to integrate the inequalities obtained in Lemma \ref{easybounds} and use the fact  that throughout $\Omega_a$ we have $y^2\leq \frac{x^2+a^2}{4}$ (see Remark \ref{bound su yxa}):
$$|u_a(x,y)-u_a(x,0)|\leq \int_0^y| \partial_t u_a(x,t) |dt \leq \dfrac{4|x|}{(x^2+a^2)^2}\frac{y^2}{2} \leq \dfrac{|x|}{2(x^2+a^2)}\,. $$

Let us fix an angle $\theta \in (2, \pi)$. From the last estimate and (2) we get that the absolute value of the angle between $\gamma'_{x,a}(y)$ and $\gamma'_{x,a}(0)$ is at most $\frac{\theta}{2}$ if and only if $|x|>\frac{1}{\theta}$.
We are looking at the case  $|x|>x_A$, so we get that the angle is at most $\frac{1}{2x_A}$, so that all the tangent vectors $\gamma'_{x,a}(y)$ live in a sector of angle at most $\frac{1}{x_A}=\frac{6\pi}{4+\pi}<\pi$ in the direction of $\gamma'_{x,a}(0)$. In particular $\gamma_{x,a}$ is a graph
over the line in that direction. But by continuity this actually implies that the whole curve is contained in the same sector.
\end{proof}

\begin{oss}\label{reason for xA}
For $|x|\leq x_A < \frac{1}{2}$, equation (2.11) in \cite{cm28} proves that $\gamma_{x,a}$ is contained in a sector of angle $2$. In our extension to $|x|>x_A$, we get a sector of angle $\theta_0=\frac{1}{x_A}=\frac{6\pi}{4+\pi}$ (see equation (\ref{def xA}) for the definition of $x_A$). The reason for this particular choice is as follows: in order to get a convex sector we need $0<\theta_0=\frac{1}{x_A}<\pi$, i.e. $\frac{1}{\pi}<x_A$, but in order to get a connected domain $\Omega_a$ we also need $x_A<\frac{1}{2}$. 

The first third of the interval $[\frac{1}{\pi},\frac{1}{2}]$ turns out to be a geometrically convenient choice to satisfy both conditions, and this is the main reason for the specific choice of $x_A$ we made above. Any other choice for the value of $x_A$ in the interval $(\frac{1}{\pi},\frac{1}{2})$ would work.
\end{oss}

In Lemma 3 of \cite{cm28}, it is also proved that when $|x|\leq 1/2$, the curves $\gamma_{x,a}$ have their endpoints outside the disks $D_{r_0}((0,0,x))$, where $r_0>0$ does not depend on $a$ or $x$. We are now going to prove that the same is still true for arbitrarily large values of $|x|$.

\begin{lemma}\label{properly embedded}
Using the notations above, there exists
$ r_0'>0$ such that for any $|x|>x_A$, and for any $ a \in (0,\frac{1}{2}]$, we have $||\gamma_{x,a}(y_{x,a})-\gamma_{x,a}(0)||>r_0'$
\end{lemma}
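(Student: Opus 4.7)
The plan is to bound $\|\gamma_{x,a}(y_{x,a})-\gamma_{x,a}(0)\|$ from below by the projection of the displacement onto the unit vector $\gamma'_{x,a}(0)$. Writing $\gamma_{x,a}(y_{x,a})-\gamma_{x,a}(0)=\int_0^{y_{x,a}}\gamma'_{x,a}(y)\,dy$ and invoking Lemma~\ref{facts about the curves}(2), one obtains
\[
\|\gamma_{x,a}(y_{x,a})-\gamma_{x,a}(0)\|\;\geq\;\int_0^{y_{x,a}}\cosh(v_a(x,y))\,\cos(u_a(x,y)-u_a(x,0))\,dy.
\]

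The next step is to bound the integrand from below by a positive constant independent of $x$ and $a$. Clearly $\cosh(v_a(x,y))\geq 1$. For the cosine factor, the estimate $|u_a(x,y)-u_a(x,0)|\leq\frac{|x|}{2(x^2+a^2)}\leq\frac{1}{2|x|}$ derived inside the proof of Lemma~\ref{facts about the curves}(3) yields, for $|x|>x_A$, the uniform angular bound $\frac{1}{2x_A}=\frac{3\pi}{4+\pi}$. As emphasized in Remark~\ref{reason for xA}, this value is strictly less than $\pi/2$ (equivalently, $\pi>2$), so $\cos(u_a(x,y)-u_a(x,0))\geq\cos\bigl(\tfrac{3\pi}{4+\pi}\bigr)=:c_1>0$ throughout the interval of integration.

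It then remains to show that the endpoint $y_{x,a}$ itself admits a uniform lower bound as $x$ ranges over $|x|>x_A$ and $a$ over $(0,1/2]$. I would split according to the piecewise definition of $y_{x,a}$: for $|x|\geq x_B$, one has $y_{x,a}=\tfrac{1}{2}\sqrt{x^2+a^2}\geq\tfrac{x_B}{2}$; for $x_A\leq|x|\leq x_B$, the value $y_{x,a}$ is the linear interpolation $\xi_1$ (or $\xi_2$) between $y_A$ and $y_B$, and since $y_A\geq\tfrac{1}{2}x_A^{3/2}$ and $y_B\geq\tfrac{1}{2}x_B$ uniformly in $a\in(0,1/2]$, one gets $y_{x,a}\geq\min\{\tfrac{1}{2}x_A^{3/2},\tfrac{1}{2}x_B\}=:c_2$. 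Setting $r_0':=c_1\,c_2$ then yields the desired uniform bound.

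The main obstacle is not analytic but rather a bookkeeping matter: one must verify that the angular bound $\tfrac{1}{2x_A}$ remains strictly less than $\pi/2$ in the extended regime $|x|>x_A$, so that the cosine stays bounded away from zero. This is precisely the geometric constraint that forced the specific choice of $x_A\in(\tfrac{1}{\pi},\tfrac{1}{2})$ in the construction of the domains $\Omega_a$, as already highlighted in Remark~\ref{reason for xA}, so the pieces fit together by design.
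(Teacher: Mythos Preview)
Your proof is correct and follows the same overall strategy as the paper: bound the displacement from below by its projection onto the unit vector $\gamma'_{x,a}(0)$, write this projection as the integral $\int_0^{y_{x,a}}\cosh(v_a)\cos(u_a-u_a(x,0))\,dy$, and use the uniform angular bound $\tfrac{1}{2x_A}=\tfrac{3\pi}{4+\pi}<\tfrac{\pi}{2}$ from Lemma~\ref{facts about the curves} to keep the cosine bounded away from zero.

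The only difference is in the final step. You use $\cosh\geq 1$ on the full interval and then give an explicit, case-by-case lower bound for $y_{x,a}$ over $|x|>x_A$. The paper instead restricts the integral to the half-interval $[y_{x,a}/2,y_{x,a}]$, invokes a lower bound for $v_a$ there via point~(6) of Lemma~\ref{easybounds}, and then bounds the length of that half-interval by $\tfrac{1}{4}\sqrt{x^2+a^2}\geq\tfrac{x_A}{4}$ (this last step being literally correct only for $|x|\geq x_B$, where $y_{x,a}=\tfrac{1}{2}\sqrt{x^2+a^2}$). Your treatment is a touch more elementary and handles the interpolation region $x_A\leq|x|\leq x_B$ explicitly, which the paper's write-up glosses over.
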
 
\begin{proof}
 For any $y\in [-y_{x,a},y_{x,a}]$ we have the following equality
\begin{equation}\label{norm_excursion}
||\gamma_{x,a}(y)-\gamma_{x,a}(0)||=\dfrac{\langle \gamma_{x,a}(y)-\gamma_{x,a}(0) , \gamma'_{x,a}(0)\rangle}{||\gamma'_{x,a}(0)||\cos ( \gamma_{x,a}(y)-\gamma_{x,a}(0),\gamma'_{x,a}(0) )}
\end{equation}
and we want to find a lower bound for this quantity which is independent of $a$ and $x$.
As observed above, $||\gamma'_{x,a}(0)||=1$, and of course the cosine term is bounded. So it is enough to bound the numerator of the right hand side. Reasoning as in the previous lemma, we have the following:
\begin{align}
   \label{TO BE USED IN SECTION 9 1} \langle \gamma_{x,a}(y)-\gamma_{x,a}(0) ,& \gamma'_{x,a}(0)\rangle = \int _0^y \langle \gamma'_{x,a}(t) ,\gamma'_{x,a}(0) \rangle dt \\=&\int_0^y \cosh v_a(x,t)\cos(u_a(x,t)-u_a(x,0)) dt\,. \label{TO BE USED IN SECTION 9 2}
\end{align}
From Lemma \ref{facts about the curves} we get that the absolute value of the argument of the cosine is at most $\frac{\theta_0}{2}=\frac{1}{2x_A}=\frac{3\pi}({4+\pi})$, so we can get a lower bound for the cosine. 

Moreover from Lemma \ref{easybounds} we have $v_a(x,y)\geq 0$ when $y\ge 0$. This implies that the integrand function is non-negative, and allows us to get
\begin{align}
\langle \gamma_{x,a}(y)-\gamma_{x,a}(0) ,& \gamma'_{x,a}(0)\rangle  \geq \cos\left( \theta_0 \right) \int_{y_{x,a}/2}^{y_{x,a}} \cosh v_a(x,t) dt
\end{align}
so it is now enough to provide a lower bound for $v_a$. Integrating the estimate in point (6) of Lemma \ref{easybounds}, one easily gets that
\begin{align}
    \min_{y_{x,a}/2\leq y \leq y_{x,a}} |v_a(x,y)|=\left|v_a\left(x,\frac{y_{x,a}}{2}\right)\right|\geq \frac{3}{32\sqrt{x^2+a^2}}
\end{align}
In the end we get
\begin{align*}
    \langle \gamma_{x,a}(y)-\gamma_{x,a}(0) ,& \gamma'_{x,a}(0)\rangle  \geq \cos(\theta_0) \cosh \left(\frac{3}{32\sqrt{x^2+a^2}} \right) \frac{\sqrt{x^2+a^2}}{4}\geq
\end{align*}
\begin{align*}
   \geq \frac{\cos(\theta_0)}{4}|x| \geq
    \frac{\cos(\theta_0)}{4}x_A
    \geq \frac{\cos(\theta_0)}{4\pi}\,.
\end{align*}

\end{proof}

We conclude this section with the following statement.
\begin{prop}\label{multivalued graphs}
There exists $R_0>0$ such that, for any $a \in (0,\frac{1}{2}]$, the surface $\Sigma_a\cap \{x_1^2+x_2^2\leq R_0^2, x_3 \in \mathbb R\}$ is a properly embedded minimal disk. Moreover, we have that $\lbrace 0<x_1^2+x_2^2<R_0^2,\,x_3\in\mathbb R\rbrace\cap F_a(\Omega_a)=\Tilde{\Sigma}_{1,a}\cup\Tilde{\Sigma}_{2,a}$ for multi-valued graphs $\Tilde{\Sigma}_{1,a}$ and $\Tilde{\Sigma}_{2,a}$ over $D_{R_0}(0)\setminus\lbrace0\rbrace$.
\end{prop}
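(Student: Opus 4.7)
The strategy is to combine Lemma \ref{properly embedded} with the analogous bound from \cite{cm28} valid for $|x|\le x_A$, yielding a uniform $r^{*}>0$ and a sector half-angle $\theta^{*}/2<\pi/2$ so that, for every $a\in(0,1/2]$ and every $x\in\mathbb R$, the slice $\gamma_{x,a}$ is a graph in a convex sector of angle $\theta^{*}$ with endpoints at distance at least $r^{*}$ from $(0,0,x)$. I would then fix any $R_0\in(0,r^{*})$.

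The technical core is to prove that, for every $x$ and $a$, the function $d_x(y):=\|\gamma_{x,a}(y)-(0,0,x)\|$ is strictly monotone in $|y|$. Differentiating $d_x^{2}$ and using Lemma \ref{lemma_derF}, one obtains
\begin{equation*}
\tfrac{1}{2}\partial_y d_x^{2}(y)=\int_0^{y}\cosh v_a(x,t)\cosh v_a(x,y)\cos\bigl(u_a(x,t)-u_a(x,y)\bigr)\,dt.
\end{equation*}
Integrating the pointwise estimate in Lemma \ref{easybounds}(5) and using Remark \ref{bound su yxa} (together with the sharper form of $y_{x,a}$ for $|x|\le x_A$) yields $|u_a(x,t)-u_a(x,y)|<\pi/2$ for all $t,y$ in the same slice, so the integrand is positive. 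Combined with $d_x(\pm y_{x,a})\ge r^{*}>R_0$, strict monotonicity implies that $\{y:d_x(y)\le R_0\}=[y_-(x),y_+(x)]$ is a closed sub-interval strictly inside $(-y_{x,a},y_{x,a})$, with $y_\pm$ continuous in $x$ by the implicit function theorem (since monotonicity keeps $\partial_y d_x\ne 0$ at $y=y_\pm$).

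The disk structure and properness then follow. Since $F_a$ preserves $x_3$ (Remark \ref{vertical segment}) and each horizontal slice is injective by the graph property, $F_a$ is a global embedding; restricting to $\Omega_a^{*}:=\{(x,y):y_-(x)\le y\le y_+(x)\}$ gives a homeomorphism onto $\Sigma_a\cap\{x_1^{2}+x_2^{2}\le R_0^{2}\}$, and inherited from the simply connected strip $\Omega_a^{*}$ this realizes the intersection as an embedded minimal disk. For proper embedding, for each $N>0$ the set $\Omega_a^{*}\cap\{|x|\le N\}$ is closed and bounded in $y$ by Remark \ref{bound su yxa}, hence compact; its image is therefore compact, and any compact $K$ inside the cylinder is contained in such a slab.

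For the multi-valued graph decomposition I split $\Omega_a^{*}\setminus\{y=0\}$ into its upper and lower halves and set $\tilde\Sigma_{i,a}$ to be the corresponding $F_a$-images, which exhaust the two components of $\Sigma_a\cap\{0<x_1^{2}+x_2^{2}<R_0^{2}\}$ by Remark \ref{vertical segment}. A direct calculation from Lemma \ref{lemma_derF} shows that the Jacobian of the projection $(x,y)\mapsto((F_a)_1,(F_a)_2)$ equals $-\sinh v_a\cosh v_a$, non-zero off $\{y=0\}$ by Lemma \ref{easybounds}(7); so the projection is a local diffeomorphism onto the punctured disk, and combined with the monotone variation of the angular coordinate (readable off the explicit expression of $u_a$ in Remark \ref{Explicit formulation for u and v}) this identifies each $\tilde\Sigma_{i,a}$ as a multi-valued graph over $D_{R_0}(0)\setminus\{0\}$. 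The step I expect to be the main obstacle is the uniform monotonicity of $d_x$ in $|y|$, since it rests on the pointwise bound $|u_a(x,t)-u_a(x,y)|<\pi/2$ rather than the cruder sector estimate and must be verified consistently across both the bounded regime $|x|\le x_A$ (covered in \cite{cm28}) and the unbounded regime $|x|>x_A$ treated in the previous section.
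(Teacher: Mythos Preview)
Your argument is correct and follows the same overall architecture as the paper's proof: combine the endpoint bound from Lemma~\ref{properly embedded} for $|x|>x_A$ with the analogous bound from \cite{cm28} for $|x|\le x_A$, take $R_0$ below the minimum, and then observe that the surface is graphical over the horizontal plane precisely away from $\{y=0\}$. Your Jacobian computation $\det=-\sinh v_a\cosh v_a$ is exactly the dual of the paper's observation that $\langle\mathbf n,(0,0,1)\rangle=0$ iff $|g_a|=1$ iff $y=0$; the two statements are equivalent.

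The one place you do more than the paper is the radial monotonicity of $d_x(y)$. The paper simply cites Corollary~1 of \cite{cm28} (for $|x|\le x_A$) and asserts the analogous statement for $|x|>x_A$, whereas you prove directly that $\tfrac12\partial_y d_x^2>0$ for $y>0$ via the bound $|u_a(x,t)-u_a(x,y)|\le \theta_0/2=\tfrac{3\pi}{4+\pi}<\tfrac{\pi}{2}$. That bound is indeed valid (integrating Lemma~\ref{easybounds}(5) over $[t,y]\subset[0,y_{x,a}]$ and using $y_{x,a}^2\le\tfrac14(x^2+a^2)$ yields at most $\tfrac{1}{2x_A}$ when $|x|>x_A$, and at most $\tfrac12$ when $|x|\le x_A$ using the sharper $y_{x,a}=\tfrac12(x^2+a^2)^{3/4}$), so the step you flagged as the main obstacle in fact goes through cleanly. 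This monotonicity argument buys you a self-contained proof that each horizontal slice meets the solid cylinder in a single arc, without appealing to \cite{cm28}; the paper's route is shorter but relies on that external reference for the same conclusion.
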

\begin{proof}
By Lemma \ref{facts about the curves} we have that $F_a$ is an embedding. From Corollary 1 in \cite{cm28}, it follows that there exists a $r_0>0$ such that for all $|x|\leq x_A$ and $a\in (0,\frac{1}{2}]$ we have that $\Sigma_a\cap \{x_3=x\}$ intersects $D_{r_0}((0,0,x))$ in a properly embedded arc. A similar statement holds when $|x|>x_A$ for a possibly different $r_0'>0$ thanks to Lemma \ref{properly embedded}. It is then enough to take $R_0=\min\{r_0,r_0'\}$.

Furthermore, if we consider equation ($\ref{unit normal 1})$, we get that $F_a$ is \textit{vertical}, that is $\langle\mathbf{n},(0,0,1)\rangle=0$ when $\vert g_a\vert=1$. However, $\vert g_a(x,y)\vert=1$ exactly when $y=0$, hence by Remark \ref{vertical segment}, we know that the image is graphical away from the $x_3$-axis. Combining this with Lemma \ref{properly embedded} for $\vert x\vert\ge x_A$ and equation (2.8) in \cite{cm28} for $|x|\leq x_A$, we finally get our result.
\end{proof}

\section{Upper bound: not properly embedded in an exhaustion of $\mathbb{R}^3$}

We are left to prove that our minimal disks $\Sigma_a$ are not properly embedded in any growing sequence of open sets that exhausts $\mathbb R^3$. 

In order to do so, we will show that there exists at least a value $x\neq 0$ for which the maximal excursion of all helicoids on the horizontal slice $\lbrace x_3=x\rbrace$ is bounded from above by a constant $C_x$ depending only on $x$. This would then imply that one can find an infinite cylinder \textit{wide enough} (for example, we could take $\lbrace x_1^2+x^2_2 < 2C_x,\,x_3\in\mathbb R\rbrace$) in which the minimal disks $\Sigma_a$ are not properly embedded. As we will see in Remark \ref{per ogni x non 0} this is actually the case for all $x \neq 0$.

\begin{lemma}\label{Upper bound of v(x,y_ax)}
Given an arbitrary value $x\in\mathbb R\setminus\lbrace 0\rbrace$, we have
\begin{align}
    \max_{0\le\vert y\vert\le y_{a,x}}\vert v_a(x,y_{x,a})\vert\le M_x\,,
\end{align}
where $M_x$ is a constant that depends only on $x$.
\end{lemma}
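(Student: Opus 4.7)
The plan is to reduce the maximum to the value of $|v_a|$ at the endpoints $y=\pm y_{x,a}$ via monotonicity, and then to manipulate the explicit formula for $v_a$ from Remark \ref{Explicit formulation for u and v} so that the apparent singularity in $a$ cancels via the inequality $\log(1+t)\le t$. Throughout, $x\neq 0$ is fixed and $a$ ranges over $(0,1/2]$; I interpret the quantity in the statement as $\max_{0\le|y|\le y_{x,a}}|v_a(x,y)|$ (a harmless typo).

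First I would observe that point (6) of Lemma \ref{easybounds} gives $\partial_y v_a>0$, so $y\mapsto v_a(x,y)$ is strictly increasing on $[-y_{x,a},y_{x,a}]$; since $v_a(x,0)=0$ and the explicit formula shows that $v_a(x,\cdot)$ is odd in $y$, it follows that
\[
\max_{0\le|y|\le y_{x,a}}|v_a(x,y)|=v_a(x,y_{x,a})\,.
\]
Next, a short manipulation of the formula from Remark \ref{Explicit formulation for u and v} gives
\[
v_a(x,y_{x,a})=\frac{1}{4a}\log\!\left(1+\frac{4a\,y_{x,a}}{(a-y_{x,a})^2+x^2}\right)\le\frac{y_{x,a}}{(a-y_{x,a})^2+x^2}\le\frac{y_{x,a}}{x^2}\,,
\]
where the first inequality is $\log(1+t)\le t$ for $t\ge 0$. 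Finally, Remark \ref{bound su yxa} together with $a\le 1/2$ yields $y_{x,a}\le \sqrt{x^2+a^2}/2\le\sqrt{x^2+1/4}/2$, so one can take
\[
M_x:=\frac{\sqrt{x^2+1/4}}{2x^2}\,.
\]

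The only mildly delicate point is the $1/(4a)$ prefactor in $v_a$, which looks singular as $a\to 0^+$: the bound $\log(1+t)\le t$ provides precisely the cancellation needed to absorb this factor, leaving a constant that depends only on $x$. Note that $M_x\to\infty$ as $x\to 0$, which is consistent with the hypothesis $x\neq 0$ in the statement and with the fact that the curvature blow-up for the family $\Sigma_a$ occurs at the origin.
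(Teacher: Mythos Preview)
Your proof is correct, and it takes a genuinely different route from the paper's. The paper does not use the closed formula for $v_a$; instead it bounds the derivative directly, deriving from Lemma~\ref{easybounds} the upper estimate $\partial_y v_a(x,y)\le \frac{16}{9}\,\frac{1}{x^2+a^2}$ (using $y^2\le\frac{x^2+a^2}{4}$ on $\Omega_a$), and then integrates from $0$ to $y_{x,a}$ together with $y_{x,a}\le \frac{\sqrt{x^2+a^2}}{2}$ to obtain $M_x=\frac{8}{9|x|}$. Your approach replaces the integration by the algebraic identity $\frac{(a+y)^2+x^2}{(a-y)^2+x^2}=1+\frac{4ay}{(a-y)^2+x^2}$ and the elementary inequality $\log(1+t)\le t$, which neatly kills the $1/(4a)$ prefactor. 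Both arguments hinge on the same two ingredients---the monotonicity of $v_a(x,\cdot)$ and the bound on $y_{x,a}$ from Remark~\ref{bound su yxa}---and either constant is adequate for the subsequent Lemma~\ref{bound from above on scalar product}; the paper's $M_x\sim 1/|x|$ is sharper near $x=0$, while yours is slightly more direct and avoids estimating $\partial_y v_a$ from above.
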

\begin{proof}
By construction, as already pointed out in Remark \ref{bound su yxa}, we know that $ y^2\le \frac{1}{4}(x^2+a^2)$ for any $(x,y)\in\Omega_a$ and for any $a\in (0,1/2]$, hence from Lemma \ref{easybounds} we get:
\begin{align*}
    \partial_yv_a(x,y)=&\frac{x^2+a^2-y^2}{(x^2+a^2-y^2)^2+4x^2y^2}\le \frac{x^2+a^2}{(x^2+a^2-(x^2+a^2)/4)^2}=\frac{16}{9}\frac{1}{x^2+a^2}\,.
\end{align*}

Applying this estimate (and knowing $v_a(x,y)$ is an increasing function in $y$, see Lemma \ref{easybounds}), we then obtain:
\begin{align}
    \max_{0\le\vert y\vert\le y_{x,a}}\vert v_a(x,y)\vert&\le\int_0^{y_{x,a}}\vert\partial_y v_a(x,y)\vert dy=\int_0^{y_{a,x}}\partial_yv_a(x,y)dy\\&\le\frac{16}{9}\frac{1}{x^2+a^2}\int_{0}^{y_{a,x}}dy=\frac{16}{9}\frac{1}{x^2+a^2}\frac{(x^2+a^2)^{1/2}}{2}\\&=\frac{8}{9}\frac{1}{\sqrt{x^2+a^2}}\le \frac{8}{9\vert x\vert}=:M_x\;\;\forall\,a\in(0,1/2]\,.
    \end{align}
    \end{proof}
    
    \begin{lemma}\label{bound from above on scalar product} Given an arbitrary value $x\in\mathbb R\setminus\lbrace 0\rbrace$, we also have:
    \begin{align}
       \big\vert \langle \gamma_{x,a}(y_{x,a})-\gamma_{x,a}(0), \gamma'_{x,a}(0)\rangle\big\vert\le N_x\,,
    \end{align}
    where $N_x$ is a constant that depends only on $x$.
    \end{lemma}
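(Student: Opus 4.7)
The plan is to proceed along the exact same lines as the proof of Lemma \ref{properly embedded}, but now extracting an upper bound rather than a lower bound. The scalar product can be written as the integral
\begin{align*}
\langle \gamma_{x,a}(y_{x,a}) - \gamma_{x,a}(0), \gamma'_{x,a}(0)\rangle = \int_0^{y_{x,a}} \cosh v_a(x,t)\cos(u_a(x,t) - u_a(x,0))\, dt,
\end{align*}
which is the formula obtained in equations (\ref{TO BE USED IN SECTION 9 1})--(\ref{TO BE USED IN SECTION 9 2}). All that remains is to estimate the integrand from above and the length of the integration interval, both in a way that depends only on $x$ and not on $a$.

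For the integrand, the cosine factor is trivially at most $1$ in absolute value, while the hyperbolic cosine factor is bounded by Lemma \ref{Upper bound of v(x,y_ax)}: for every $t$ in the integration range, $|v_a(x,t)| \le M_x = \frac{8}{9|x|}$, and hence $\cosh v_a(x,t) \le \cosh M_x$. For the length of the interval, Remark \ref{bound su yxa} gives $y_{x,a} \le \frac{\sqrt{x^2+a^2}}{2}$, and since $a \in (0,1/2]$, this is uniformly bounded by $\frac{\sqrt{x^2+1/4}}{2}$.

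Putting the two bounds together, I would conclude
\begin{align*}
\big|\langle \gamma_{x,a}(y_{x,a}) - \gamma_{x,a}(0), \gamma'_{x,a}(0)\rangle \big| \le \cosh\!\left(\frac{8}{9|x|}\right) \cdot \frac{\sqrt{x^2+1/4}}{2} =: N_x,
\end{align*}
which is a constant depending only on $x \neq 0$. There is no substantive obstacle here: the uniformity in $a$ is already handed to us by the previous lemma and remark, and the argument reduces to a triangle inequality after unwinding the integral expression. In essence, this lemma is the complementary half of the computation in Lemma \ref{properly embedded}: there one exploited $\cosh \ge 1$ and a lower bound on $v_a$ to bound the integral from below, whereas here one exploits $|\cos| \le 1$ together with an upper bound on $v_a$ to bound the same integral from above.
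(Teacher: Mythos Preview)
Your proof is correct and follows essentially the same approach as the paper: both use the integral expression from equations (\ref{TO BE USED IN SECTION 9 1})--(\ref{TO BE USED IN SECTION 9 2}), bound $|\cos|\le 1$, invoke Lemma \ref{Upper bound of v(x,y_ax)} to get $\cosh v_a \le \cosh M_x$, and use Remark \ref{bound su yxa} together with $a\le 1/2$ to bound $y_{x,a}$, arriving at the identical constant $N_x = \cosh(M_x)\cdot\frac{\sqrt{x^2+1/4}}{2}$.
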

    \begin{proof}
    Using the equation (\ref{TO BE USED IN SECTION 9 1}, \ref{TO BE USED IN SECTION 9 2}), we get:
    \begin{align}
        \langle \gamma_{x,a}(y_{x,a})-\gamma_{x,a}(0) , \gamma'_{x,a}(0)\rangle=\int_0^{y_{x,a}}\cosh v_a(x,t)\cos(u_a(x,t)-u_a(x,0)) dt\,.
    \end{align}
    
    Therefore, since cosine is a bounded function, and using the estimate provided by Lemma \ref{Upper bound of v(x,y_ax)}, we obtain:
    \begin{align}
        \big\vert\langle \gamma_{x,a}(y_{x,a})-\gamma_{x,a}(0), \gamma'_{x,a}(0)\rangle \big\vert\le\int_0^{y_{x,a}}\vert\cosh(v_a(x,y))\vert dy\\
        =\int_0^{y_{x,a}}\cosh(v_a(x,y)) dy\le\cosh(M_x)\int_0^{y_{x,a}} dy\\ \le\cosh(M_x)\cdot\frac{\sqrt{x^2+a^2}}{2}\le \cosh(M_x)\cdot \frac{\sqrt{x^2+1/4}}{2}&=:N_x\,.
    \end{align}
    \end{proof}
    \begin{oss}\label{per ogni x non 0}
    Let us study the behaviour of $N_x$. As already pointed out before, $M_x=C\cdot 1/\vert x\vert$, hence
    \begin{align}
        N_x\approx C_2 \cosh\bigg(\frac{1}{\vert x\vert}\bigg)\cdot(\vert x\vert +C_2)\,,
    \end{align}
    therefore $N_x\to\infty$ both when $\vert x\vert\to 0$, and when $\vert x\vert\to\infty$. On the other hand, for any $x$ for which $\vert x\vert\in (0,\infty)$, we have that $N_x$ is bounded.
    \end{oss}
    \begin{prop}The minimal disks $\Sigma_a$ cannot be properly embedded in any growing sequence of open sets in $\mathbb R^3$ that exhausts $\mathbb R^3$.
    \end{prop}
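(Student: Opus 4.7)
The plan is to argue by contradiction: assume that for each $a\in(0,1/2]$ there exists an open set $U_a\subset\mathbb R^3$ such that $\Sigma_a=F_a(\Omega_a)$ is properly embedded in $U_a$ and the family $\{U_a\}$ exhausts $\mathbb R^3$, meaning that every compact subset of $\mathbb R^3$ is contained in $U_a$ for all sufficiently small $a$. I will then exhibit a limit point of $\Sigma_a$ lying inside $U_a$ but not in $\Sigma_a$, contradicting proper embedding.

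The first step is to upgrade the projection estimate of Lemma \ref{bound from above on scalar product} into an honest Euclidean distance estimate. Fix once and for all some $x_0$ with $|x_0|>x_A$ (for instance $x_0=1$). By Lemma \ref{facts about the curves}(3) the curve $\gamma_{x_0,a}$ is contained in a convex sector of opening $\theta_0=6\pi/(4+\pi)<\pi$ about the direction $\gamma'_{x_0,a}(0)$, so the angle between $\gamma_{x_0,a}(y)-\gamma_{x_0,a}(0)$ and $\gamma'_{x_0,a}(0)$ is at most $\theta_0/2$. Feeding this cosine lower bound together with the upper bound of Lemma \ref{bound from above on scalar product} into equation (\ref{norm_excursion}) yields
\begin{equation*}
\|\gamma_{x_0,a}(\pm y_{x_0,a})-(0,0,x_0)\|\le \frac{N_{x_0}}{\cos(\theta_0/2)}=:C_{x_0},
\end{equation*}
uniformly in $a\in(0,1/2]$, where Remark \ref{vertical segment} is used to identify $\gamma_{x_0,a}(0)$ with $(0,0,x_0)$. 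Hence both endpoints $p_a^{\pm}:=F_a(x_0,\pm y_{x_0,a})$ of the horizontal slice of $\Sigma_a$ at height $x_3=x_0$ lie in a ball of $a$-independent radius $C_{x_0}$ around $(0,0,x_0)$.

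For the contradiction, by the exhaustion hypothesis, for all sufficiently small $a$ the closed ball $\overline{B}_{C_{x_0}+1}((0,0,x_0))$ is contained in $U_a$, so in particular $p_a^{+}\in U_a$. On the other hand, $p_a^{+}$ is a limit point of $\Sigma_a$ which does not belong to $\Sigma_a$: for any sequence $y_n\nearrow y_{x_0,a}$ with $(x_0,y_n)\in\mathrm{int}(\Omega_a)$, continuity of $F_a$ yields $F_a(x_0,y_n)\to p_a^{+}$ in $\mathbb R^3$, while $p_a^{+}\notin\Sigma_a=F_a(\mathrm{int}(\Omega_a))$ by injectivity of $F_a$ on $\overline{\Omega_a}$. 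The latter injectivity follows since distinct values of $x$ produce distinct $x_3$-coordinates (Remark \ref{vertical segment}) and along the fixed slice $\{x_3=x_0\}$ the curve $\gamma_{x_0,a}$ is a graph over a line by Lemma \ref{facts about the curves}(3), so its endpoint cannot be attained by any interior parameter $y\in(-y_{x_0,a},y_{x_0,a})$. Consequently $\Sigma_a$ is not closed in $U_a$ and cannot be properly embedded there, contradicting the standing assumption.

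The only subtle point I foresee is the verification that $p_a^{+}$ is a genuine frontier point of $\Sigma_a$, i.e.\ that no interior point of $\Omega_a$ maps onto it; this injectivity of $F_a$ extended to $\overline{\Omega_a}$ is not spelled out explicitly in the earlier sections, but it is a direct consequence of Proposition \ref{multivalued graphs} together with the graphical structure of the horizontal slices of Lemma \ref{facts about the curves}(3), so it should require only a short check rather than a new idea.
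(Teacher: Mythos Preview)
Your proposal is correct and follows essentially the same route as the paper. Both arguments combine Lemma \ref{facts about the curves}(3) (the sector/angle bound) with Lemma \ref{bound from above on scalar product} through equation (\ref{norm_excursion}) to obtain a uniform bound $\|\gamma_{x_0,a}(y_{x_0,a})-\gamma_{x_0,a}(0)\|\le C_{x_0}$, and then infer that the disks cannot be properly embedded in any open set large enough to swallow the ball of radius $C_{x_0}$ at height $x_0$. The paper records only the excursion bound and leaves the final ``therefore not proper in an exhaustion'' step to the reader (cf.\ the paragraph opening Section 4); you have simply written that step out as an explicit contradiction via a boundary limit point, which is exactly the intended mechanism.
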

    \begin{proof}
    In order to prove this, it is sufficient to show that there exists at least one value of $x\in\mathbb R$ for which the maximal excursion of all the helicoids $\Sigma_a$ on the horizontal slice $\lbrace x_3=x\rbrace$ is bounded from above by a constant that depends only on $x$.
    
    In order to do so, we will be combining the results provided in Lemma \ref{bound from above on scalar product} and Lemma \ref{facts about the curves}, so we finally obtain:
    \begin{align*}
        \Vert \gamma_{x,a}(y_{x,a})-\gamma_{x,a}(0)\Vert =\frac{\langle \gamma_{x,a}(y_{x,a})-\gamma_{x,a}(0),\gamma_{x,a}'(0) \rangle}{\cos \left( \gamma_{x,a}(y_{x,a})-\gamma_{x,a}(0),\gamma_{x,a}'(0) \right)}\le C\,N_x=:C_x\,.
    \end{align*}
   
    \end{proof}
\section{Proof of Theorem \ref{main thm}}
This result is analogous to Theorem 1 in Colding and Minicozzi's paper
\cite{cm28}. In order for this paper to be self-contained, we report the proof below.

\begin{proof}[Proof of Theorem \ref{main thm}]
Up to rescaling, it is sufficient to show that there exists a sequence $\Sigma_i\subset\lbrace x_1^2+x_2^2\le R^2,\,x_3\in\mathbb R\rbrace$, for some $R>0$. Lemma \ref{facts about the curves} gives minimal embeddings $F_a:\Omega_a\to\mathbb R^3$ with $F_a(x,0)=(0,0,x)$ for any $x\in\mathbb R$ (see Remark \ref{vertical segment}), so by Proposition \ref{multivalued graphs} (3) holds for any $R\le R_0$.

Let us take $R=\frac{R_0}{2}$ and $\Sigma_i=\lbrace x_1^2+x_2^2\le R,\,x_3\in\mathbb R\rbrace\cap F_{a_i}(\Omega_{a_i})$, where the sequence $a_i$ is to be determined.

To get (1), simply notice that, by equation in point 2. of Lemma \ref{easybounds}, we get $\vert\mathbf{K}_a\vert(0)=a^{-4}\to\infty$ as $a\to 0$.

Still applying the same equation in point 2. of Lemma \ref{easybounds}, we get that for all $\delta >0$, $\sup_a\sup_{\lbrace\vert x\vert\ge\delta\rbrace\cap\Omega_a}\vert\mathbf{K}_a\vert<\infty$ . Combine this with (3) and Heinz's curvature estimate for minimal graphs (see equation (11.7) in \cite{osserman2002survey}), and we get (2).

In order to get (4), we use Lemma 2 in \cite{cm28} to choose $a_i\to 0$ so the mappings $F_{a_i}$ converge uniformly in $C^2$ on compact subsets to $F_0:\Omega_0\to\mathbb R^3$. Hence, by Lemma \ref{facts about the curves} and Proposition \ref{multivalued graphs}, the $\Sigma_i\setminus\lbrace x_3=0\rbrace$ converge to two embedded minimal disks $\Sigma^\pm\subset F_0(\Omega_0^\pm)$ with $\Sigma^\pm\setminus\lbrace x_3\text{-axis}\rbrace=\Sigma_1^\pm\cup\Sigma_2^\pm$ for multi-valued graphs $\Sigma_i^\pm$.

To complete the proof, one should notice that in a small enough neighbourhood of the plane $\{x_3=0\}$, our minimal disks $\Sigma_i$ coincide with those constructed in \cite{cm28}. Hence each graph $\Sigma_i^{\pm}$ is $\infty$-valued (and hence spirals into the horizontal plane $\lbrace x_3=0\rbrace$), completing point (4).


\end{proof}


\printbibliography

\end{document}